\theoremstyle{plain}
\newtheorem{theorem}{Theorem}[section]
\newtheorem{conjecture}[theorem]{Conjecture}
\newtheorem{proposition}[theorem]{Proposition}
\newcommand{\Z}{\mathbb{Z}}
\title{Index 3 biembeddings of the complete graphs}
\author{Juvenal F.\ Barajas and Timothy Sun\\Department of Computer Science\\San Francisco State University}
\date{}
\begin{document}

\maketitle

\begin{abstract}
We show that the complete graphs on $24s+21$ vertices have decompositions into two edge-disjoint subgraphs, each of which triangulates an orientable surface. The special case where the two surfaces are homeomorphic solves a generalized Earth-Moon problem for that surface. Unlike previous constructions, these pairs of triangular embeddings are derived from index 3 current graphs.
\end{abstract}

\section{Introduction}

There are many graph parameters that generalize the notion of planarity. Perhaps the most well-known of such parameters is the \emph{genus} of the graph, which is the smallest value $g$ such that the graph has an embedding in $S_g$, the orientable surface of genus $g$. A less-studied parameter is the \emph{thickness} of a graph, which is the size of the smallest partition of the edges into planar subgraphs. A graph is said to be \emph{biembeddable} in surfaces $S$ and $S'$ if it can be decomposed into two edge-disjoint subgraphs, one of which embeds in $S$ and the other embeds in $S'$. When $S$ is homeomorphic to $S'$, we simply say that the graph is biembeddable in $S$. We consider a variant of both genus and thickness, the \emph{bigenus} of a graph $\beta(G)$, which is defined to be the smallest value $g$ such that the graph $G$ is biembeddable in $S_g$.

The \emph{Earth-Moon problem} is a longstanding open problem on the maximum possible chromatic number of a graph with thickness 2, or equivalently, bigenus 0. At present, it is known that this value is 9, 10, 11, or 12 (see \cite{Gethner-ToTheMoon}). The upper bound is derived from a standard coloring argument based on average degree, which Heawood \cite{Heawood-MapColour} also uses to color graphs embedded in arbitrary orientable surfaces. Heawood's conjecture that his upper bound is tight is now called the Map Color Theorem \cite{Ringel-MapColor}, proven by Ringel, Youngs, \emph{et al.} 

Jackson and Ringel \cite{JacksonRingel-Variations} conjecture a similar result for graphs \emph{bi}embeddable in higher-genus orientable surfaces. The maximum chromatic number over all graphs biembeddable in the surface $S_g$ is called the \emph{bichromatic number} of $S_g$ and is denoted by $\chi_2(S_g)$. The same coloring argument is used to prove the following Heawood-like inequality:

\begin{proposition}[Jackson and Ringel \cite{JacksonRingel-Variations}]
The bichromatic number of the orientable surface $S_g$, where $g \geq 1$, is at most
$$\chi_2(S_g) \leq \left\lfloor \frac{13+\sqrt{73+96g}}{2} \right\rfloor.$$
\label{prop-color}
\end{proposition}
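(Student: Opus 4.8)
The plan is to run the classical Heawood degree-counting argument of \cite{Heawood-MapColour}, but with the Euler-formula edge bound doubled to account for the two embedded subgraphs. First I would pass to a well-behaved extremal graph: write $k = \chi_2(S_g)$ and choose a graph $G$ biembeddable in $S_g$ with $\chi(G) = k$ that is vertex-critical, i.e.\ $\chi(G - v) < k$ for every vertex $v$. Every subgraph of a biembeddable graph is again biembeddable (simply restrict the edge-partition and the two embeddings), so $G$ is a legitimate witness, and vertex-criticality forces the minimum degree to satisfy $\delta(G) \ge k-1$. Since a $k$-chromatic graph has at least $k$ vertices, we also get $n := |V(G)| \ge k$. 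If $n \le 2$ then $k \le 2$ and the claimed bound holds trivially (its right-hand side is at least $13$ for $g \ge 1$), so from now on assume $n \ge 3$.

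Next I would bound $|E(G)|$ via Euler's formula. By definition of biembeddability, $E(G)$ partitions as $E(G_1) \cup E(G_2)$ with each $G_i$ embeddable in $S_g$. Each $G_i$ is a simple graph on at most $n \ge 3$ vertices, so the usual face-counting consequence of Euler's formula gives $|E(G_i)| \le 3n - 6 + 6g$, and hence $|E(G)| \le 6n - 12 + 12g$. Combining this with $2|E(G)| \ge \delta(G)\, n \ge (k-1)n$ yields
$$(k-1)\,n \le 12n - 24 + 24g, \qquad\text{equivalently}\qquad (k - 13)\,n \le 24(g-1).$$

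Finally I would extract the stated closed form. If $k \le 13$, then since $g \ge 1$ we have $\sqrt{73 + 96g} \ge \sqrt{169} = 13$, so $\lfloor (13 + \sqrt{73+96g})/2 \rfloor \ge 13 \ge k$ and we are done. If instead $k \ge 14$, the displayed inequality together with $n \ge k$ gives $k(k-13) \le (k-13)\,n \le 24(g-1)$, i.e.\ $k^2 - 13k - 24(g-1) \le 0$; solving this quadratic in $k$ gives $k \le \tfrac{1}{2}\bigl(13 + \sqrt{169 + 96(g-1)}\bigr) = \tfrac{1}{2}\bigl(13 + \sqrt{73 + 96g}\bigr)$, and taking the floor completes the argument.

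There is no deep obstacle here — the whole point is that this is a direct adaptation of Heawood's counting argument, and indeed Proposition~\ref{prop-color} is presumably quoted precisely because the interesting direction (constructing biembeddings of $K_n$ that meet or approach this bound) is what the rest of the paper addresses. The only places needing a little care are: (i) verifying that vertex-criticality together with the subgraph-closure of biembeddability legitimately supplies $\delta(G) \ge k-1$; (ii) confirming the bound $|E(G_i)| \le 3n-6+6g$ holds for each piece irrespective of whether $G_i$ is connected or has isolated vertices; and (iii) the bookkeeping that turns the linear inequality $(k-13)n \le 24(g-1)$ into the quadratic bound, including the separate, trivial treatment of the regime $k \le 13$.
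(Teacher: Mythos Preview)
Your argument is correct and is exactly the approach the paper has in mind: it does not give a detailed proof of Proposition~\ref{prop-color}, but explicitly says it follows from Heawood's degree-counting argument together with the doubled Euler bound (Proposition~\ref{prop-singlebound} applied to each piece of the biembedding), which is precisely what you carry out. Your handling of the $k \le 13$ case and the bookkeeping turning $(k-13)n \le 24(g-1)$ into the quadratic are fine.
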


\begin{conjecture}[Jackson and Ringel \cite{JacksonRingel-Variations}]
For all $g \geq 1$, the bound in Proposition \ref{prop-color} is tight.
\label{conj-upper}
\end{conjecture}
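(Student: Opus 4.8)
The plan is to adapt the proof architecture of the Map Color Theorem to the biembedding setting, of which this paper supplies the residue class $n \equiv 21 \pmod{24}$. I would first pin down the complementary lower bound: a graph of genus at most $g$ on $n$ vertices has at most $3n-6+6g$ edges, so if $K_n$ decomposes into two such subgraphs then $\binom{n}{2} \le 2(3n-6+6g)$, i.e.\ $\beta(K_n) \ge \lceil (n^2-13n+24)/24 \rceil =: B(n)$; solving the corresponding quadratic shows that the bound of Proposition~\ref{prop-color} is met by a \emph{triangular} biembedding of $K_n$ exactly when $(n^2-13n+24)/24 \in \Z$, which happens iff $n \equiv 0, 13, 16, 21 \pmod{24}$. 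The reduction then has two halves. (a) For every residue of $n$, construct a biembedding of $K_n$ --- triangular in the four exact classes, with a bounded number of non-triangular faces otherwise, and with a few extra adjacencies or a deleted matching in the genuinely irregular classes --- realizing $\beta(K_n) = B(n)$. (b) Interpolate: for $B(n) \le g < B(n+1)$, add a handle to one of the two embeddings so that $K_n$ biembeds in $S_g$, and check by a routine arithmetic estimate that $\lfloor (13+\sqrt{73+96g})/2 \rfloor = n$; then $\chi_2(S_g) \ge \chi(K_n) = n$, and Proposition~\ref{prop-color} forces equality, with small $g$ treated as exceptions.

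The engine for (a) is the \emph{method of current graphs}. For an exact residue one seeks a pair of cubic current graphs over a common cyclic group $\Z_m$ ($m$ equal to $n$ or to $n$ minus a small constant, realized at index $1$, $2$, or $3$) whose current assignments obey Kirchhoff's current law, generate the group, produce only triangular faces in their derived rotation systems, and --- the extra condition with no counterpart in the single-surface Map Color Theorem --- \emph{partition} the nonzero group elements between the two graphs, so that the two derived triangulations are edge-disjoint and together form $K_n$. Non-exact residues are handled by letting one of the two embeddings carry a controlled surplus of quadrilateral faces, together with the ``additional adjacency'' and doubling devices of Ringel's book; small $n$ and the irregular classes are dispatched by sporadic biembeddings found by hand or by computer search, exactly as in the twelve-cases-plus-exceptions layout of the Map Color Theorem.

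The main obstacle is step (a) in the residue classes that force index $2$ and, above all, index $3$ current graphs: there is no uniform recipe, each class is its own combinatorial design problem, and the index $3$ machinery employed here --- with its more elaborate rule for reading the derived embedding off the current graph --- makes both construction and verification markedly heavier than the classical index $1$ case. The partition-of-differences requirement couples the two current graphs, so they cannot be built independently; producing a \emph{complementary} current graph that consumes precisely the leftover differences while still triangulating is the delicate step, and it is what the present construction solves for $n \equiv 21 \pmod{24}$. I expect the remaining residue classes to yield one at a time to such paired constructions, with the irregular values mopped up individually, but assembling all of them --- and ruling out a sporadic surface whose bichromatic number secretly exceeds the Heawood-type bound --- is precisely what leaves Conjecture~\ref{conj-upper} open.
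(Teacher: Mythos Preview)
The statement you are trying to prove is labelled \emph{Conjecture} in the paper for a reason: it is open, and the paper does not contain a proof of it. What the paper supplies is one ingredient toward the stronger Conjecture~\ref{conj-lower}, namely triangular biembeddings of $K_{24s+21}$; the remaining residue classes modulo $24$ are unresolved, and without them neither conjecture can be settled. Your write-up is not a proof but an outline of a research programme, and you yourself concede as much in the final sentence. That is the genuine gap: step~(a) of your plan is exactly the part nobody knows how to carry out in general, and no amount of architectural description substitutes for the missing current-graph families.

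Two smaller points are also off. First, your closing remark about ``ruling out a sporadic surface whose bichromatic number secretly exceeds the Heawood-type bound'' is backwards: Proposition~\ref{prop-color} is an unconditional upper bound, so nothing can exceed it; the entire content of the conjecture is achieving it from below. Second, in your interpolation step~(b) you propose to ``add a handle to one of the two embeddings'', but biembeddability in $S_g$ requires \emph{both} pieces to embed in $S_g$, so handles must be added to both surfaces; adding to only one yields a $(g_0,g_0{+}1)$-biembedding, which is a different object.
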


Just like the Map Color Theorem, this generalization of the Earth-Moon problem hardly resembles the original problem on the sphere: for all other surfaces, one might expect that the upper bound is always matched by a biembedding of a complete graph on the same number of vertices. Conjecture \ref{conj-upper} thus has a stronger ``graph-centric'' formulation in terms of bigenus:

\begin{proposition}[Cabaniss and Jackson \cite{CabanissJackson-Biembeddings}]
The bigenus of the complete graph $K_n$ is at least
$$\beta(K_n) \geq \left\lceil \frac{n^2-13n+24}{24} \right \rceil.$$
\label{prop-genus}
\end{proposition}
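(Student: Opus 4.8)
The plan is a counting argument based on Euler's formula, applied to each half of the decomposition. Suppose $K_n$ biembeds in $S_g$, and fix a decomposition of its edge set as $E(K_n) = E(G_1) \sqcup E(G_2)$ with each $G_i$ embeddable in $S_g$. Each $G_i$ is then a simple graph on at most $n$ vertices, so by the standard edge bound for simple graphs on surfaces — obtained from Euler's formula $V - E + F = 2 - 2g$ together with the face-counting inequality $3F \le 2E$, after reducing to a cellular embedding on a surface of genus at most $g$ — we have $|E(G_i)| \le 3n - 6 + 6g$ for $i = 1, 2$.

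Adding the two inequalities gives
\[
\binom{n}{2} \;=\; |E(G_1)| + |E(G_2)| \;\le\; 2\bigl(3n - 6 + 6g\bigr) \;=\; 6n - 12 + 12g,
\]
so $12g \ge \binom{n}{2} - 6n + 12 = \tfrac12 (n^2 - 13n + 24)$, i.e.\ $g \ge (n^2 - 13n + 24)/24$. Since $g$ is an integer, $g \ge \lceil (n^2 - 13n + 24)/24 \rceil$, and as this holds for every $g$ in which $K_n$ biembeds, it holds for $\beta(K_n)$.

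I do not expect a serious obstacle here: this is exactly the Heawood-style Euler/average-degree estimate behind Proposition \ref{prop-color}, now bookkept across two edge-disjoint subgraphs rather than a single graph. The only points that deserve a sentence of care are the usual ones — passing to a cellular embedding (and, for a disconnected $G_i$, observing that adding edges between components only makes the edge bound harder to satisfy) so that $3F \le 2E$ is legitimately available, and restricting attention to values of $n$ large enough that the right-hand side is a meaningful lower bound. The substantive content of the paper lies not in this inequality but in the matching upper-bound constructions via index $3$ current graphs.
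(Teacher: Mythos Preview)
Your argument is correct and is exactly the approach the paper indicates: it states Proposition~\ref{prop-singlebound} and then remarks that ``for biembeddings, a graph can have twice as many edges, and one can use this inequality to prove Propositions~\ref{prop-color} and~\ref{prop-genus},'' which is precisely your doubling of the edge bound $|E| \le 3|V|-6+6g$. No further detail is given in the paper, so your write-up is, if anything, more complete.
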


\begin{conjecture}[Cabaniss and Jackson \cite{CabanissJackson-Biembeddings}]
For all $n \geq 11$, $$\beta(K_n) = \left\lceil \frac{n^2-13n+24}{24} \right \rceil.$$
\label{conj-lower}
\end{conjecture}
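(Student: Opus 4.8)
The lower bound $\beta(K_n) \ge \lceil (n^2-13n+24)/24\rceil$ is already furnished by Proposition~\ref{prop-genus}, so the task is to produce, for every $n \ge 11$, a decomposition of $E(K_n)$ into two subgraphs $G_1$ and $G_2$ that each embed in $S_g$ with $g = \lceil (n^2-13n+24)/24 \rceil$. Since a simple $n$-vertex graph embedded in $S_g$ has at most $3n-6+6g$ edges, with equality precisely when it triangulates $S_g$, any such decomposition forces $G_1$ and $G_2$ to be triangulations (or, when the ceiling rounds up, near-triangulations with a single non-triangular face); thus the proof amounts to building pairs of edge-disjoint (near-)triangular embeddings whose union is $K_n$. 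I would organize the argument by residue class of $n$ modulo $24$, and describe here the cleanest case, $n = 24s+21$, in which $24 \mid n^2-13n+24$ and both $G_1$ and $G_2$ are forced to be genuine triangulations of $S_{24s^2+29s+8}$, each carrying exactly half of the edges of $K_{24s+21}$. Equivalently, one is looking for an $n$-vertex graph $G$ with $\binom{n}{2}/2$ edges such that $G$ and its complement in $K_n$ both triangulate $S_{24s^2+29s+8}$.

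The plan is to manufacture both triangulations at once from a single combinatorial gadget: a current graph of index $3$ with current group $\Z_n$. Writing $n = 3(8s+7)$, such a current graph is a graph $\Gamma$, embedded in an orientable surface with exactly three faces, whose arcs carry currents from $\Z_n$ subject to Kirchhoff's current law at every vertex; tracing the three faces produces three generating rows from which a full rotation system on the vertex set $\Z_n$ is obtained by the index-$3$ rule, and Kirchhoff's law makes every face of the derived embedding a triangle. To obtain a biembedding rather than a single embedding, I would additionally $2$-color the arcs of $\Gamma$ so that the two color classes realize complementary halves of the nonzero elements of $\Z_n$; reading off the rotation from only the arcs of one color then yields a triangular rotation system $\pi_i$ on the subgraph $G_i$ carved out by that color. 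Given an explicit $\Gamma$, the remaining verifications are routine: one checks that each nonzero current occurs with the correct multiplicity and color so that $G_1 \cup G_2 = K_n$ and both graphs are simple, confirms the index-$3$ consistency conditions so that the traced rows really do assemble into closed triangular embeddings, and reads off $g_i = 24s^2 + 29s + 8$ from Euler's formula.

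The crux is the construction of the index-$3$ current graph itself. Index-$1$ current graphs suffice for the Map Color Theorem, but forcing the \emph{two} color classes to be simultaneously complementary, arc-disjoint, and triangulation-generating is a stronger demand, and it must be met uniformly in the parameter $s$; I expect the right object to be a ``ladder'' family of current graphs built from a fixed motif repeated $\Theta(s)$ times, together with a short list of exceptional small cases checked by hand. The delicate point throughout is confirming that the derived objects form an honest pair of triangulations rather than embeddings with mixed face sizes or repeated edges — this reduces to a finite but intricate check of the rotation generated at each of the three vertex types, which is where the bulk of the casework lies. Settling the full conjecture for all $n \ge 11$ would then require analogous constructions for the other residue classes modulo $24$ — with a single quadrilateral face absorbing the ``$+1$'' from the ceiling in the classes where $24 \nmid n^2-13n+24$ — presumably by index-$1$ or index-$2$ current graphs of their own; only the case $n \equiv 21 \pmod{24}$ is treated here.
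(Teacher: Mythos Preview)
This statement is presented in the paper as an open \emph{conjecture}, not a theorem; the paper does not prove it. The paper's contribution toward it is Theorem~\ref{thm-main}, which settles only the residue class $n \equiv 21 \pmod{24}$ (the class $n \equiv 13 \pmod{24}$ having been handled earlier in \cite{Sun-Bigenus} via index-$1$ current graphs). Your final paragraph already concedes that the remaining residues are untreated, so at best your proposal addresses that single case.

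Even restricted to $n = 24s+21$, your mechanism differs from the paper's and, as described, does not work. You propose a \emph{single} index-$3$ current graph $\Gamma$ over $\Z_n$ whose arcs are then $2$-colored, with each color class generating one of the two triangulations. But triangularity of a derived embedding is a consequence of Kirchhoff's current law at the degree-$3$ vertices of the current graph; restricting to a monochromatic arc set destroys both the degree condition and KCL, so ``reading off the rotation from only the arcs of one color'' has no reason to yield a triangular embedding. Put differently, if your full $\Gamma$ satisfied (E1)--(E4) with every nonzero element in each log, its derived embedding would already be an orientable triangular embedding of $K_n$ itself --- which does not exist for $n \equiv 21 \pmod{24}$, since then $n \equiv 9 \pmod{12}$. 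The paper instead constructs a \emph{pair} of separate index-$3$ current graphs over $\Z_{24s+21}$, each satisfying (E1)--(E4) on its own, together with the joint conditions (E5) that the logs of corresponding circuits in the two graphs partition $\Z_n \setminus \{0\}$, and (E6) that the two current graphs have equally many vertices. Each current graph then independently produces a triangular derived embedding of a spanning subgraph of $K_n$, (E5) forces the two subgraphs to be edge-complementary, and (E6) together with Proposition~\ref{prop-cgenus} forces the two genera to coincide.
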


The bigenus of the complete graph $K_n$ can equal exactly $n^2-13n+24/24$ only when both embeddings of the biembedding are triangular. These so-called \emph{triangular biembeddings} are only possible when $n \equiv 0, 13, 16, 21 \pmod{24}$, otherwise the expression is not an integer. With the exception of some small cases ($\beta(K_n)$ is known for all $n \leq 14$ \cite{Ringel-Farbungsprobleme, BattleHararyKodama, Tutte-K9, Ringel-Toroidal, Beineke-TwoTorus}), all other known constructions of minimum genus biembeddings of $K_n$ have been triangular biembeddings. The second author \cite{Sun-Bigenus} found triangular embeddings of self-complementary graphs on $16$, $21$, and $24$ vertices through computer search. One of the aforementioned residues, $n \equiv 13 \pmod{24}$, has been solved using \emph{current graphs}, a covering space construction that has proven to be effective for finding triangular embeddings of dense graphs. The application of current graphs to biembeddings was initiated by Anderson and White \cite{AndersonWhite}, who found a pair of current graphs that produce a triangular biembedding of $K_{37}$. Cabaniss and Jackson \cite{CabanissJackson-Biembeddings} then solved the bigenus of $K_{61}$ and $K_{85}$. Finally, the second author \cite{Sun-Bigenus} completed this line of work by finding an infinite family of current graphs that produce triangular biembeddings of the complete graphs on $n = 24s+13$ vertices, for all $s \geq 1$. 

The aforementioned current graphs are all of \emph{index 1}, i.e., they are all 1-face embeddings. We solve another one of the residues by constructing triangular biembeddings of the complete graphs $K_{24s+21}$, for all $s \geq 0$, using \emph{index 3} current graphs. 

\section{Graph embeddings}

We assume prior knowledge of topological graph theory and the theory of current graphs. For background on these topics, see Gross and Tucker \cite{GrossTucker} and Ringel \cite{Ringel-MapColor}. In particular, Section 9 of Ringel \cite{Ringel-MapColor} describes current graph constructions similar to the ones we will present here. For more information on the thickness parameter and its variants, see Beineke \cite{Beineke-Survey}. 

A \emph{cellular embedding} of a graph $G = (V,E)$ in the surface $S_g$ is an injective mapping $\phi\colon G \to S_g$, where the components of $S_g \setminus \phi(G)$ are open disks. We call these disks \emph{faces}. In this paper, all graph embeddings are cellular and in orientable surfaces. If the set of faces is denoted by $F(\phi)$, then its size is determined by the \emph{Euler polyhedral equation}
$$|V|-|E|+|F(\phi)| = 2-2g.$$ 
When $G$ is simple, the Euler polyhedral equation implies a well-known inequality on the number of edges in $G$:
\begin{proposition}
If $G = (V,E)$ is a simple graph embedded in the orientable surface $S_g$, then
$$|E| \leq 3|V|-6+6g,$$
with equality if and only if the embedding is triangular. 
\label{prop-singlebound}
\end{proposition}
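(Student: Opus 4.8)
The plan is to combine the Euler polyhedral equation with a standard double-counting of edge--face incidences. First I would bound the number of faces: each face of a cellular embedding is an open disk, so its boundary is traced by a closed walk in $G$, and because $G$ is simple (no loops, no parallel edges) every such facial boundary walk has length at least $3$. Summing the lengths of all facial walks counts each edge exactly twice, once from each of its two sides, so $2|E| = \sum_{f \in F(\phi)} \ell(f) \geq 3\,|F(\phi)|$, where $\ell(f)$ denotes the length of the boundary walk of $f$.

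Next I would substitute the Euler polyhedral equation. From $|V| - |E| + |F(\phi)| = 2-2g$ we get $|F(\phi)| = 2 - 2g - |V| + |E|$, and feeding this into $3\,|F(\phi)| \leq 2|E|$ yields $3(2 - 2g - |V| + |E|) \leq 2|E|$, which rearranges directly to $|E| \leq 3|V| - 6 + 6g$.

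For the equality statement, note that the only inequality used is $3\,|F(\phi)| \leq 2|E|$, so equality in the edge bound holds if and only if $\ell(f) = 3$ for every face $f$. Here I would invoke simplicity once more to observe that a facial boundary walk of length $3$ in a simple graph is a genuine $3$-cycle (three distinct vertices and three distinct edges), i.e., the embedding is triangular, which is exactly the claimed characterization. There is no real obstacle in this argument; the only points requiring a word of care are the use of simplicity at both steps and the fact that a cellular embedding always has $|F(\phi)| \geq 1$, so the summation over faces is never vacuous.
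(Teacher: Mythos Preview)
The paper does not supply its own proof of this proposition; it is stated as a well-known consequence of Euler's formula and then used as a tool. Your argument is precisely the standard textbook proof (double-count edge--face incidences to get $3|F(\phi)|\le 2|E|$, then substitute Euler's formula), and it is correct.

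One small quibble, which is really a defect of the bare statement rather than of your method: simplicity alone does not force every facial walk to have length at least $3$, because a bridge can be traversed twice by the same face. For the degenerate graph $K_2$ on the sphere the unique face has boundary length $2$, and indeed the inequality $|E|\le 3|V|-6+6g$ is false there. Once the (connected) graph has at least two edges this pathology disappears and your argument goes through verbatim; in the context of the paper, where the graphs of interest are complete graphs on many vertices, the issue never arises.
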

For biembeddings, a graph can have twice as many edges, and one can use this inequality to prove Propositions \ref{prop-color} and \ref{prop-genus}. 

To describe a cellular embedding combinatorially, each edge $e \in E$ induces two arcs $e^+$ and $e^-$ with the same endpoints, each representing the two different directions in which $e$ can be traversed. The set of such arcs is denoted $E^+$. A \emph{rotation} of a vertex is a cyclic permutation of the arcs leaving that vertex, and a \emph{rotation system} of a graph is an assignment of a rotation to each vertex. When a graph is simple, it is sufficient to describe a rotation as a cyclic permutation of the vertex's neighbors. The Heffter-Edmonds principle states that rotation systems are in one-to-one correspondence with cellular embeddings in orientable surfaces (see Section 3.2 of Gross and Tucker \cite{GrossTucker}). From a rotation system, a cellular embedding can be found through face-tracing, where each face-boundary walk corresponds to a cyclic sequence of arcs $(e_1^\pm, e_2^\pm, \dotsc, e_i^\pm)$. 

\section{Current graphs}

A \emph{current graph} is an arc-labeled, embedded graph where the arc-labeling $\alpha: E^+ \to \Z_n \setminus \{0\}$ satisfies $\alpha(e^+) = -\alpha(e^-)$ for each edge $e$. We call $\Z_n$ the \emph{current group} and the arc labels \emph{currents}. The \emph{index} of a current graph is the number of faces in the embedding. Our current graphs are of index 3, and its face-boundary walks, which we call \emph{circuits}, are labeled $[0]$, $[1]$, and $[2]$. Given a circuit, the \emph{log} of the circuit replaces each arc with its current. We require that our current graphs satisfy a standard set of properties:

\begin{enumerate}
\item[(E1)] The current graph has index 3. 
\item[(E2)] Each vertex has degree 3 and satisfies KCL.
\item[(E3)] Each nonzero element of the current group $\Z_{3m}$ appears at most once in the log of each circuit. 
\item[(E4)] If circuit $[a]$ traverses arc $e^+$ and circuit $[b]$ traverses arc $e^-$, then $\alpha(e^+) \equiv b-a \pmod{3}$. 
\end{enumerate}

The \emph{derived embedding} of a current graph satisfying the above properties is constructed in the following way: the vertex set is the current group $\Z_{3m}$, and the rotation at any vertex $i \in \Z_{3m}$ (and hence its set of neighbors) is found by taking the log of circuit $[i \bmod{3}]$ and adding $i$ (modulo $\Z_{3m}$) to each element. A vertex $i$ is called a $[k]$-vertex if $i \bmod{3} = k$, i.e., it is a vertex whose rotation is determined by circuit $[k]$. 

Since every vertex has degree 3 and satisfies KCL, the derived embedding is triangular. Its genus thus has a simple formula:

\begin{proposition}
Given an index 3 current graph, if the number of vertices is $v$, the current group is $\Z_{3m}$, and the derived embedding is connected, then its genus is $(v-6)m/4+1$.
\label{prop-cgenus}
\end{proposition}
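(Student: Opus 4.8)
The plan is to apply the Euler polyhedral equation to the derived embedding, so the work reduces to computing $|V|$, $|E|$, and $|F|$ in terms of $v$ and $m$. The vertex count is immediate from the construction: the vertex set of the derived embedding is the current group, so $|V| = 3m$.

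For the edge count, I would first observe that the current graph is 3-regular on $v$ vertices by (E2), hence has $3v/2$ edges and $3v$ arcs. Since the current graph has index 3 by (E1), each arc lies on exactly one of the three circuits, so the three circuits — equivalently their three logs — have combined length $3v$; write $\ell_k$ for the length of circuit $[k]$, so $\ell_0 + \ell_1 + \ell_2 = 3v$. The rotation at a vertex $i \in \Z_{3m}$ is the log of circuit $[i \bmod 3]$ with a shift added to each entry, so $\deg(i) = \ell_{i \bmod 3}$. Exactly $m$ of the $3m$ vertices are $[k]$-vertices for each $k \in \{0,1,2\}$, so summing degrees gives $\sum_{i \in \Z_{3m}} \deg(i) = m(\ell_0 + \ell_1 + \ell_2) = 3mv$, and therefore $|E| = 3mv/2$.

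For the face count I would invoke the fact, already noted in the text, that the derived embedding is triangular because every vertex has degree 3 and satisfies KCL; hence every face is a closed walk of length 3, so $3|F| = 2|E|$ and $|F| = mv$. Finally, since the derived embedding is connected by hypothesis, the Euler polyhedral equation $|V| - |E| + |F| = 2 - 2g$ holds for a single value $g$; substituting $|V| = 3m$, $|E| = 3mv/2$, $|F| = mv$ and solving gives $2g = 2 + m(v-6)/2$, i.e.\ $g = (v-6)m/4 + 1$.

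I do not expect a serious obstacle here; the computation is essentially the index-1 genus formula adapted to three circuits. The only points needing care are the bookkeeping that each arc of the current graph belongs to exactly one circuit (so the three logs have total length precisely $3v$) and the elementary fact that the three residue classes modulo 3 inside $\Z_{3m}$ are equinumerous, each of size $m$. Triangularity is supplied by the KCL argument preceding the statement, and connectedness of the surface — needed so that Euler's formula yields a single genus — is part of the hypothesis.
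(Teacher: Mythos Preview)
Your proof is correct and follows essentially the same route as the paper: compute $|V|=3m$ and $|E|=3mv/2$ from the circuit lengths, then use triangularity plus Euler to extract the genus. The only cosmetic difference is that the paper invokes Proposition~\ref{prop-singlebound} (the equality $|E|=3|V|-6+6g$ for triangular embeddings) in place of your explicit face count $|F|=mv$, which amounts to the same arithmetic.
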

\begin{proof}
Since there are three circuits and every vertex has degree 3, the average length of a circuit, and hence the average degree of the graph, is $v$. The above formula results from substituting $E = 3mv/2$ and $V = 3m$ into Proposition \ref{prop-singlebound}.
\end{proof}

Our current graphs come in pairs, and each pair satisfies two additional properties:

\begin{itemize}
\item[(E5)] For each $k = 0, 1, 2$, each nonzero element of $\Z_{3m}$ appears in the log of circuit $[k]$ in exactly one of the two current graphs. 
\item[(E6)] Both current graphs have the same number of vertices.
\end{itemize}

When these properties are satisfied, each possible edge between distinct vertices appears in exactly one of the two derived embeddings and by Proposition \ref{prop-cgenus}, the derived embeddings are on surfaces of the same genus. Consequently, we have a triangular biembedding of the complete graph $K_{3m}$. 

\begin{figure}[!t]
\centering
\includegraphics[scale=0.8]{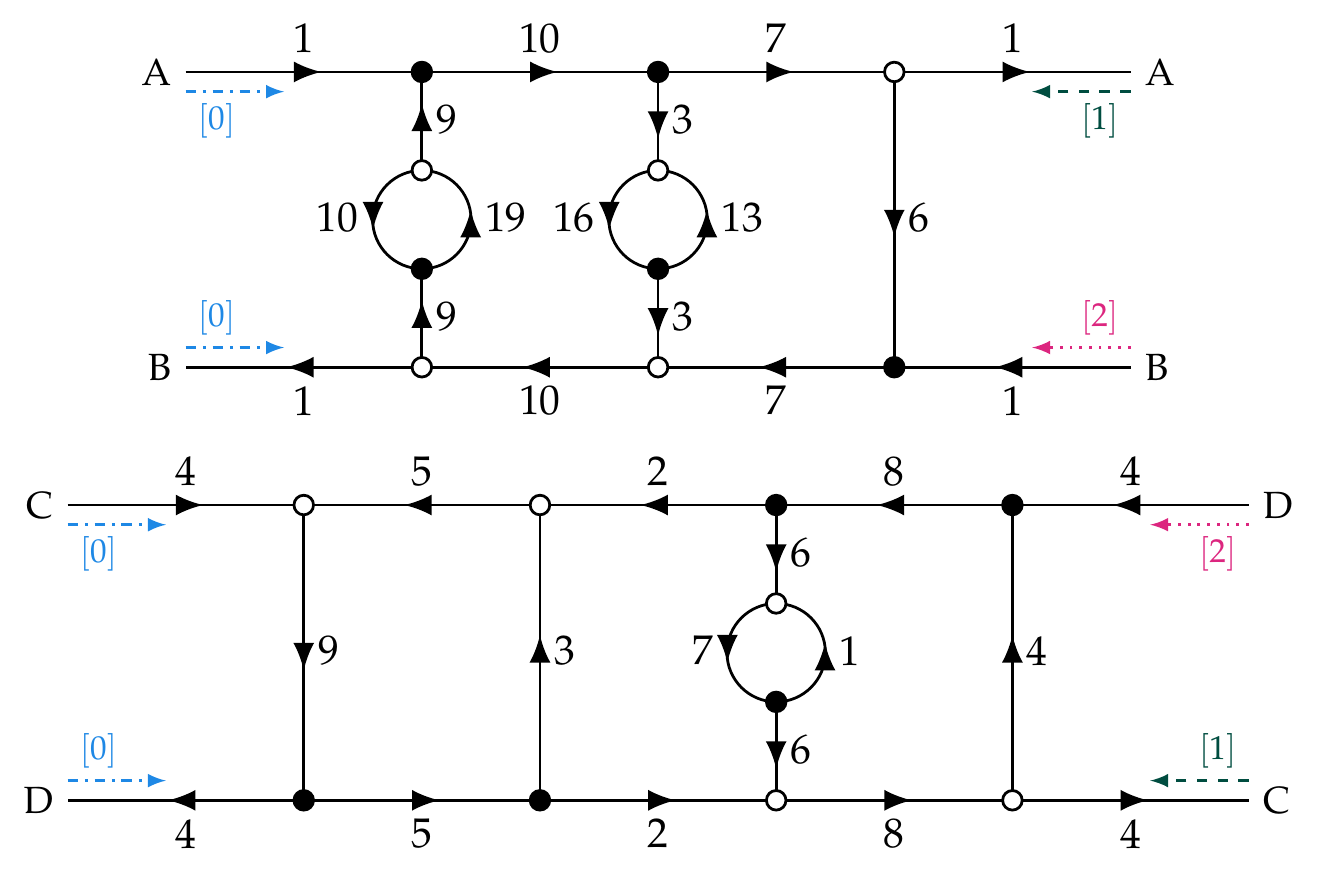}
\caption{A pair of current graphs over $\mathbb{Z}_{21}$.}
\label{fig-s0}
\end{figure}

The two current graphs in Figure \ref{fig-s0} satisfy properties (E1)--(E6). Hence, their derived embeddings form a triangular biembedding of $K_{21}$. These current graphs contain frequently used elements in index 3 constructions that were first described in detail by Youngs \cite{Youngs-3569}. The underlying graphs are (circular or M\"obius) \emph{ladders} containing \emph{rungs}. The rungs come in two varieties: \emph{simple} rungs that are just vertical edges, and \emph{ring-shaped} rungs, which have two more vertices connected by two parallel edges. 

\section{The main construction}

\begin{figure}[!t]
\centering
\includegraphics[width=\textwidth]{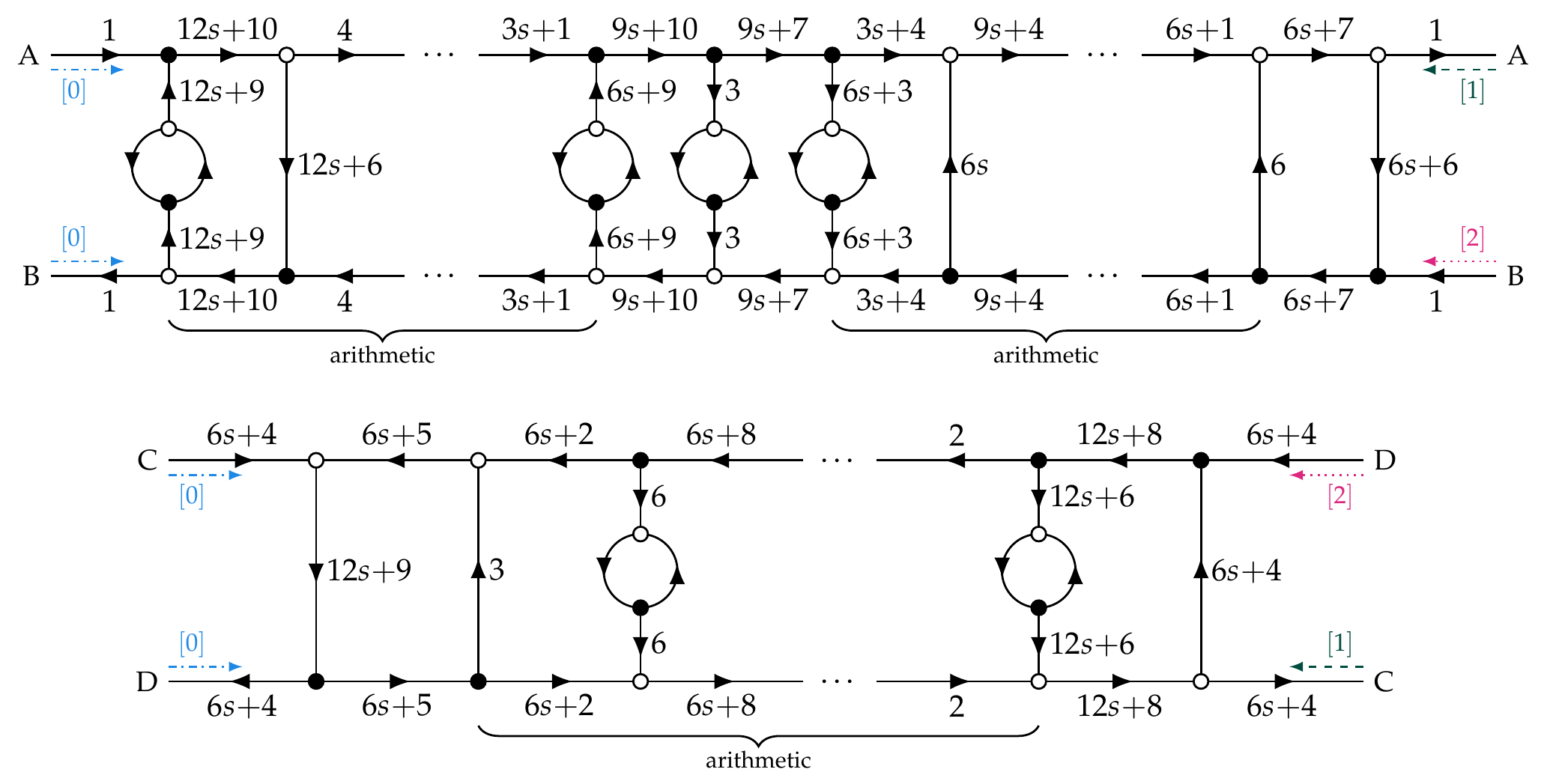}
\caption{Pairs of current graphs for all $s \geq 0$ with current group $\mathbb{Z}_{24s+21}$.}
\label{fig-general}
\end{figure}

\begin{figure}[!t]
\centering
\includegraphics[scale=0.8]{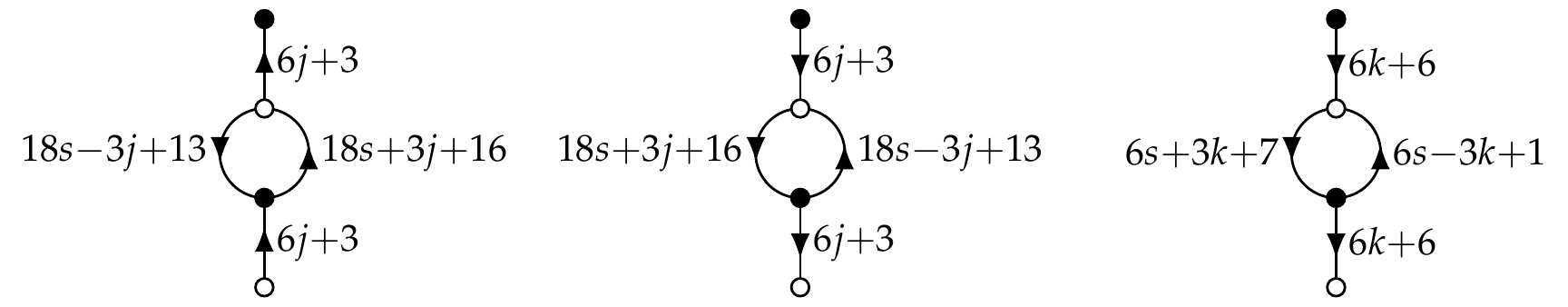}
\caption{Current assignments on circular arcs.}
\label{fig-rings}
\end{figure}

The current graphs in Figure \ref{fig-s0} constitute the smallest instance of an infinite family:

\begin{theorem}
The complete graph $K_{24s+21}$ has a triangular biembedding for all $s \geq 0$.
\label{thm-main}
\end{theorem}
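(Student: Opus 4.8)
The plan is to verify that the pair of current graphs displayed in Figure~\ref{fig-general}, together with the current assignments on the circular (ring-shaped) rungs spelled out in Figure~\ref{fig-rings}, satisfies properties (E1)--(E6) for every $s \geq 0$ over the group $\mathbb{Z}_{24s+21}$. Once this is established, the discussion preceding the theorem applies verbatim: each of the $\binom{24s+21}{2}$ edges of $K_{24s+21}$ appears in exactly one of the two derived embeddings, both derived embeddings are triangular, and by Proposition~\ref{prop-cgenus} they lie on surfaces of the same genus, so together they form a triangular biembedding of $K_{24s+21}$. The case $s = 0$ is exactly Figure~\ref{fig-s0}, which serves as the base of this (induction-free) family and as a sanity check on the general pattern.

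First I would pin down the combinatorial data. Each current graph in the family is a (circular or M\"obius) ladder whose spine carries an arithmetic-progression-like run of currents, interrupted by a bounded number of rungs: the simple rungs are single edges, each traversed in both directions by a single circuit and therefore carrying one of the ``diagonal'' currents (those divisible by $3$), while the ring-shaped rungs house the remaining exceptional currents through their digons. A short Euler-characteristic bookkeeping shows that each current graph must have $v = 12s + 10$ vertices and $18s + 15$ edges, matching the average circuit length $v = 12s+10$ dictated by the proof of Proposition~\ref{prop-cgenus} and the target genus $\tfrac{(24s+21)^2 - 13(24s+21) + 24}{24} = 24s^2 + 29s + 8$, which is exactly the lower bound of Proposition~\ref{prop-genus}. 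Properties (E1), (E2), and (E6) are then checked by direct inspection of the figures: tracing the three face-boundary walks confirms index~$3$; every vertex is seen to have degree~$3$; KCL at each spine vertex reduces to the identity ``two consecutive spine currents plus the rung current sum to zero,'' and at the interior vertices of a ring rung to the splitting identity for its two parallel edges; and both current graphs in a pair visibly have $12s+10$ vertices.

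The substantive work is the verification of (E3), (E4), and (E5), and I would organize it by separating the ``generic'' currents, which form one or two arithmetic progressions running along the ladders, from a bounded list of ``exceptional'' currents concentrated near the ends of the ladders and on the ring rungs. For (E4) one checks that the currents alternate in residue class modulo $3$ along each spine in the prescribed way and that each rung current has the residue forced by the two circuits incident to it; this is a single pattern computation for the generic part plus a finite table for the exceptions. For (E3) one traces each circuit $[k]$ and reads off its log; the claim is that the resulting cyclic sequence of elements of $\mathbb{Z}_{24s+21}$ has no repeats, which for the generic portion amounts to showing that the relevant partial sums are distinct modulo $24s+21$ (a congruence argument uniform in $s$), and for the exceptional portion is a finite check. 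For (E5) one shows that, for each $k$, the log of circuit $[k]$ in the first current graph and the log of circuit $[k]$ in the second are complementary in $\mathbb{Z}_{24s+21} \setminus \{0\}$: typically one graph supplies an initial stretch of each congruence class and the other supplies the complementary stretch, so the two logs interlock to cover every nonzero element exactly once.

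I expect the main obstacle to be (E5), the constraint special to biembeddings. It is not enough for each current graph to be individually valid; the two must be engineered so that their circuit-$[k]$ logs partition the nonzero elements, and this must hold uniformly for all $s$ while simultaneously respecting (E3) and (E4). The delicate points are the ends of the ladders, where the arithmetic progressions terminate and must be spliced to the ring rungs without collision or omission, and the correct placement of the currents divisible by $3$, which by (E4) must sit on an arc traversed twice by a single circuit and so cannot be absorbed into the generic ladder pattern. My intended approach is to make the ``bulk'' argument once for all $s$ via explicit formulas for the spine currents and their partial sums, and to discharge the ends and the ring rungs by a single finite verification, so that the only $s$-dependence lives in the lengths of the arithmetic progressions.
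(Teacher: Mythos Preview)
Your outline follows the paper's approach closely: exhibit the family of current graphs in Figure~\ref{fig-general} (with the ring currents of Figure~\ref{fig-rings}), and verify properties (E1)--(E6) uniformly in $s$. Your bookkeeping ($v = 12s+10$, $18s+15$ edges, target genus $24s^2+29s+8$) is correct, and your plan to split the verification of (E3)--(E5) into a ``generic arithmetic-progression'' part plus a bounded exceptional part is exactly what the construction is designed for. The paper organizes the (E5) check slightly differently, by first classifying every edge according to which pair of circuits $[a],[b]$ is incident with it (horizontal edges: $[0]$ with $[1]$ or $[2]$; simple rungs: $[0]$ with itself; vertical arcs of ring rungs: $[1]$ or $[2]$ with itself; circular arcs: $[1]$ with $[2]$), which makes the residue-class bookkeeping and the complementarity between the two graphs more transparent than a raw ``initial stretch versus complementary stretch'' description.

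There is, however, one genuine omission. Properties (E1)--(E6) by themselves do \emph{not} guarantee that each derived embedding is of a connected graph; they only guarantee that the two derived graphs partition the edge set of $K_{24s+21}$ and that each is locally triangulated. If one of the derived graphs were disconnected, its rotation system would produce triangular embeddings of its components in several surfaces, not a single embedding in some $S_g$, and you could not invoke Proposition~\ref{prop-cgenus} (which has connectedness as an explicit hypothesis) to conclude that the two surfaces have the same genus. The paper closes this gap with a short but necessary argument: in each current graph there is an arc with current $3$, so in the derived embedding the element $3$ (a generator of the index-$3$ subgroup) links all the $[k]$-vertices for the appropriate $k$ into a single cycle, and the presence of edges joining circuit $[0]$ to both circuits $[1]$ and $[2]$ then connects the three residue classes. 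You should add this step to your plan; without it the proof is incomplete.
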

\begin{proof}
The current graphs described in Figure \ref{fig-general} satisfy properties (E1)--(E6) and thus generate triangular biembeddings of the complete graphs $K_{24s+21}$, for all $s \geq 0$. The sections labeled ``arithmetic'' describe part of the ladder where:
\begin{itemize}
\item the rungs alternate between simple and ring-shaped,
\item the vertical arcs alternate in direction, and
\item the currents on those vertical arcs form an arithmetic sequence with step size 3.
\end{itemize}
In the interest of space, the labels on the circular arcs are given separately in Figure \ref{fig-rings}, where the variables have the ranges $j = 0, \dotsc, 2s+1$ and $k = 0, \dotsc, 2s$. To check that the derived embeddings partition the edges of $K_{24s+21}$, we categorize the edges based on their incident circuits. The horizontal edges are where circuit $[0]$ meets with either circuit $[1]$ or $[2]$; the simple rungs are where circuit $[0]$ meets with itself; the vertical edges of ring-shaped rungs are where circuits $[1]$ and $[2]$ meet with themselves; and the circular arcs are where circuits $[1]$ and $[2]$ meet. One can use this information to check that property (E5) is satisfied.

In both current graphs, there is at least one edge incident with circuits $[0]$ and $[1]$, and at least one edge incident with circuits $[0]$ and $[2]$. Because of the presence of an arc with current $3$, the derived embeddings of the first and second current graphs have a cycle passing through all the $[1]$-vertices and $[0]$-vertices, respectively. These two properties imply that the derived embeddings are connected. 
\end{proof}

\section{Biembeddings on different surfaces}

Rearranging parts of the above infinite families of current graphs results in biembeddings into two surfaces of different genus. Cabaniss and Jackson \cite{CabanissJackson-Biembeddings} say that a graph is \emph{$(g,h)$-biembeddable} if it has an edge decomposition into two subgraphs, one of which is embeddable in the surface $S_g$, and the other in $S_h$.

For each pair of current graphs in our main construction, each multiple of 3 corresponds to two rungs: in one graph, it appears as a current on a simple rung, and in the other graph, it appears twice on the vertical arcs of a ring-shaped rung. These two rungs can be swapped (possibly with some changes in arc directions) while preserving properties (E2)--(E5). Such exchanges have appeared in other constructions of index 3 current graphs (see, e.g., \cite{JungermanRingel-Minimal,Sun-Minimum}), except in those cases, they were rungs in the same current graph. In our situation, two pairs of rungs need to be swapped at the same time to ensure property (E1), that the indices of both current graphs stay at 3. Property (E6) is violated intentionally to get derived embeddings on different surfaces. 

\begin{figure}[!t]
\centering
\includegraphics[scale=0.8]{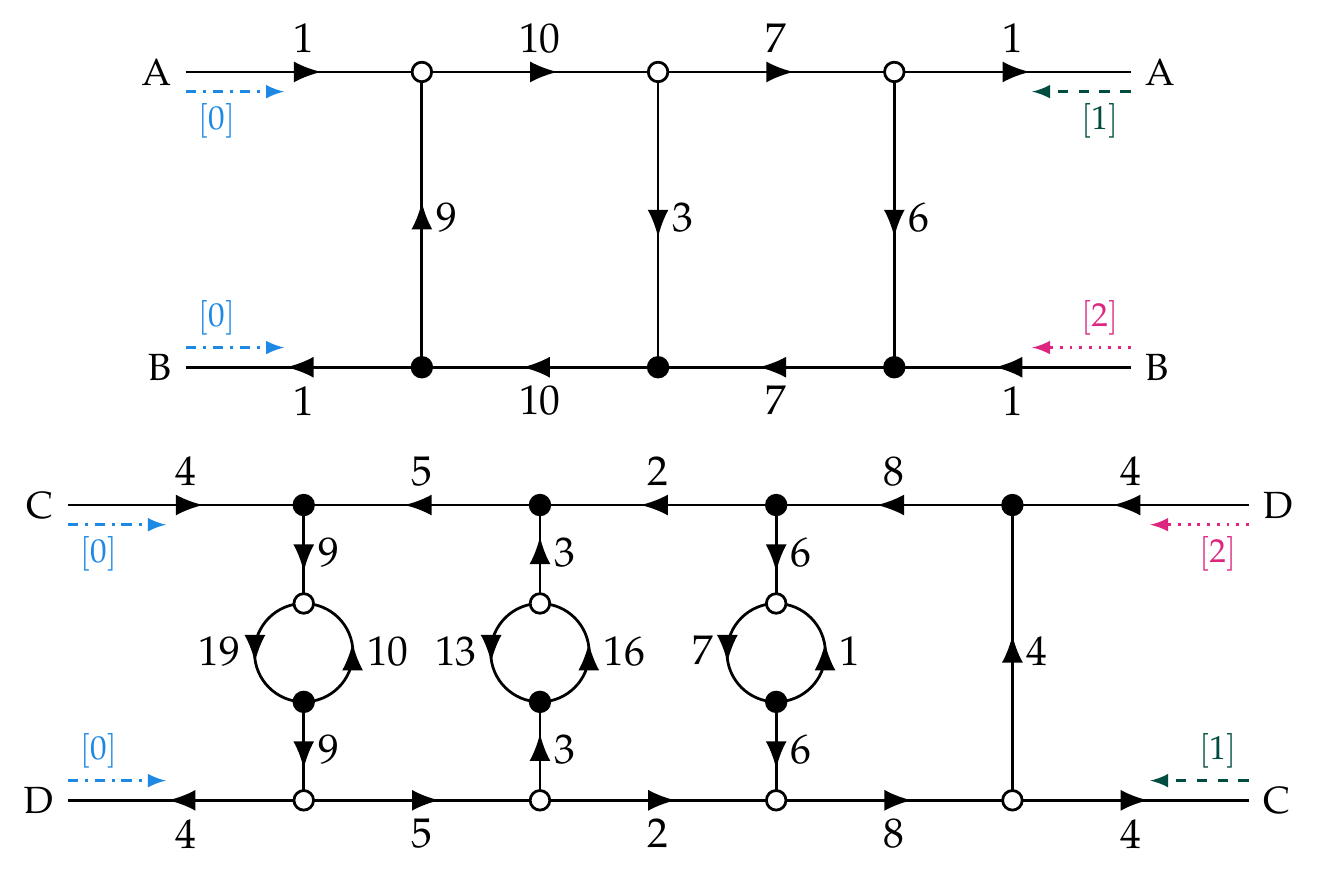}
\caption{$K_{21}$ is $(1,15)$-biembeddable.}
\label{fig-flipped}
\end{figure}

\begin{theorem}
The complete graph $K_{24s+21}$ is $$(b(s)-(8s+7)k, b(s)+(8s+7)k)\text{-biembeddable},$$ where $b(s) = \beta(K_{24s+21}) = 24s^2+29s+8$ and $k = 0, \dotsc, s+1$. 
\label{thm-swap}
\end{theorem}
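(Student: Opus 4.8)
The plan is to start from the infinite family of current graphs in Figure \ref{fig-general}, which by Theorem \ref{thm-main} already gives a triangular biembedding of $K_{24s+21}$ on two copies of $S_{b(s)}$, and to perform $k$ simultaneous rung-swaps for each $k = 0, \dotsc, s+1$. Concretely, I would first catalog the rungs whose current is a multiple of $3$: as noted in the preamble, each such multiple of $3$ appears in exactly one of the two current graphs as the current on a simple rung (where circuit $[0]$ meets itself in the derived embedding) and in the other graph as a doubled current on the vertical arcs of a ring-shaped rung (where circuits $[1]$ and $[2]$ each meet themselves). Swapping a simple rung in one graph with the corresponding ring-shaped rung in the other changes the length distribution of the circuits, hence the genus of each derived embedding, while leaving the set of currents used — and therefore property (E5) — untouched.

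The key combinatorial point to establish is the genus bookkeeping. Replacing a simple rung by a ring-shaped rung adds two vertices to a current graph; by Proposition \ref{prop-cgenus}, increasing $v$ by $2$ increases the derived genus by $m/2 = (24s+21)/6$, which is not an integer, so swaps must be done in pairs to keep each derived embedding on an orientable surface with integer genus and, more importantly, to keep each current graph at index $3$ (property (E1)). I would show that a coordinated double-swap — removing two simple rungs from graph $A$ and installing the two matching ring-shaped rungs, while simultaneously removing the two ring-shaped rungs from graph $B$ and installing the two matching simple rungs — shifts the vertex count of $A$ up by $4$ and that of $B$ down by $4$, so by Proposition \ref{prop-cgenus} the genus of the first derived embedding changes by $+4 \cdot m/4 = +m = 24s+21$... wait, let me recompute: the step should come out to $8s+7$ per unit of $k$, so the relevant quantity is $(v\text{-change})\cdot m/4$, and I would verify that the available multiples of $3$ on the ladder allow exactly $k$ independent such double-swaps for $k$ ranging up to $s+1$, yielding genera $b(s) \mp (8s+7)k$. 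The arithmetic section of the ladder, with its step-$3$ arithmetic progression of vertical currents and alternating simple/ring-shaped rungs, is precisely what guarantees a supply of $2(s+1)$ swappable multiple-of-$3$ rungs.

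After the genus count, I would verify that properties (E2)--(E5) survive each swap. Property (E2) (degree $3$ and KCL at every vertex) is local: a ring-shaped rung with doubled current $c$ on its vertical arcs and a simple rung with current $c$ both respect KCL, and the directions on the horizontal ladder arcs adjacent to the swapped rung may need to be reversed to maintain KCL and property (E4) (the mod-$3$ compatibility between the currents and the pair of incident circuits) — I would check that these local direction changes can always be made consistently. Property (E3) (each nonzero group element appears at most once per circuit log) and property (E5) (each nonzero element appears in circuit $[k]$'s log in exactly one of the two graphs, for each $k$) follow because a swap only moves a current between the "simple-rung" role and the "ring-shaped-rung" role, never changing which group elements appear or in which circuits they appear. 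Finally, I would re-run the connectivity argument from the proof of Theorem \ref{thm-main}: the arc with current $3$ is in the arithmetic section and is never swapped, so the Hamiltonian-type cycle through the $[1]$- or $[0]$-vertices persists, and the cross-circuit incidences between $[0]$ and $[1]$ and between $[0]$ and $[2]$ are likewise preserved, so both derived embeddings remain connected.

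The main obstacle I anticipate is the index-preservation bookkeeping: a single rung-swap generically merges or splits circuits and would destroy property (E1), and the whole point of doing swaps in coordinated pairs (and across the two graphs simultaneously) is to cancel these effects. Making this precise requires tracking how the face-tracing through the swapped region reconnects the three circuits $[0], [1], [2]$, showing that after a double-swap within one graph the three circuits are restored as three circuits (not one or five), and confirming that the direction reversals on the adjacent horizontal arcs are exactly what is needed for this cancellation. This is the step where I would lean on the detailed structure of Youngs' ladder gadgets and, if needed, on an explicit small example (e.g.\ Figure \ref{fig-flipped}, the $(1,15)$-biembedding of $K_{21}$) to illustrate that the reconnection behaves as claimed before asserting it for the general family.
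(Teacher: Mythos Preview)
Your approach is the paper's: swap pairs of simple and ring-shaped rungs between the two current graphs, track the resulting $\pm 4$ change in vertex count, and read off the genus shift from Proposition~\ref{prop-cgenus}. Two slips to fix. First, the current group is $\mathbb{Z}_{3m}$ with $3m = 24s+21$, so $m = 8s+7$, not $24s+21$; a vertex change of $4$ then gives a genus change of $4m/4 = 8s+7$, which resolves the arithmetic you flagged. Second, your connectivity argument asserts that the rung carrying current $3$ is never swapped, but the paper explicitly allows that rung to be swapped and notes that the connectivity argument from Theorem~\ref{thm-main} still goes through; if you forbid swapping it you may fall short of $k = s+1$, since the range of $k$ is governed by the $2s+2$ ring-shaped rungs available in the first current graph.
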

\begin{proof}
Switching two normal rungs with two ring-shaped rungs changes the total number of vertices in both current graphs by $4$. From Proposition \ref{prop-cgenus}, the genus must increase or decrease by $8s+7$. The first current graph has $2s+2$ ring-shaped rungs (the second current graph has one fewer), so up to $s+1$ pairs of rungs can be exchanged. Finally, the connectivity argument at the end of the proof of Theorem \ref{thm-main} is still valid even if the rungs with current 3 are swapped.
\end{proof}

Figure \ref{fig-flipped} shows a swap on the two current graphs that originally appeared in Figure \ref{fig-s0}. Plugging in $s = 0$ and $k = 1$ into Theorem \ref{thm-swap} shows that the derived embeddings of the graphs are on the torus and the genus 15 surface. 

\bibliographystyle{alpha}
\bibliography{biblio}

\newpage

\end{document}